\newcommand{\Fcal}{\mathcal{F}}
\newcommand{\Sym}{\mathrm{Sym}}
\newcommand{\OO}{\mathcal{O}}
\newcommand{\WW}{\mathcal W}
\renewcommand{\P}{\mathbb{P}}
\DeclareMathOperator{\sing}{Sing}
\newtheorem{theorem}{Theorem}[section]
\newtheorem{corollary}[theorem]{Corollary}
\newtheorem{proposition}[theorem]{Proposition}
\theoremstyle{definition}
\newtheorem{remark}[theorem]{Remark}
\newtheorem{example}[theorem]{Example}
\theoremstyle{plain}
\font\smallrm=cmr8
\begin{document}

\author{MAUR\'ICIO CORR\^{E}A J.r}
\address{Universidade Federal de Vi\c cosa  \\
Departamento de Matem\'atica\\
Avenida P.H. Rolfs, s/n\\
36570--000 Vi\c cosa MG Brasil} \email{mauricio.correa@ufv.br}

\author{THIAGO FASSARELLA}
\address{Universidade Federal Fluminense  \\
Instituto de Matem\'{a}tica, Departamento de An\'{a}lise. \\
Rua M\'{a}rio Santos Braga, s/n\\
Valonguinho \\
24020--140  Niter\'{o}i RJ Brasil} \email{tfassarella@id.uff.br}

%\author[{\smallrm{}M.~ Corr\^{e}a and T.~Fassarella}]
%{Maur\'icio Corr\^{e}a \and Thiago Fassarella\\
%{\smallrm\today}}

%\thanks{The authors are partially supported by FAPERJ, Proc. E-26111.849/2010 and E-26102.769/2008.}

\title[{\smallrm{}On the order of the automorphism group of  foliations}]
{On the order of the automorphism group of  foliations}

\begin{abstract}
Let  $\Fcal$ be a holomorphic foliation with ample canonical bundle on a smooth projective surface $X$. We obtain
 an upper bound on the order of its automorphism group  which depends only on $K_{\Fcal}^{2}$ and $K_{\Fcal}K_{X}$, provided this group is
  finite. Here, $K_{\Fcal}$ and $K_{X}$ are the canonical bundles of $\Fcal$ and $X$, respectively.

  %the order of the group of  birational self--maps
  %of certain
  %of a general type foliation with somewhat stronger hypothesis.

  % in terms of $K_{\mathcal G}^{2}$ and $K_{\mathcal G}K_{X}$ where $\mathcal G$ is a foliation bimeromorphically equivalent to $\Fcal$ with reduced singularities.

\end{abstract}
\maketitle

\section{Introduction}

A finiteness theorem for the automorphism group of a smooth curve of genus at least $2$ goes back to the $19$th century with H. A. Schwarz, F. Klein and H. Poincar\'e.  The order of this group was studied by A. Hurwitz who obtained  in \cite{H} the following well--known bound
$$
|\mathrm{Aut}(C)|\leq 42\deg(K_C),
$$
where $K_{C}$ is the canonical bundle of the curve $C$.  In  arbitrary dimension, the finiteness of the automorphism group, or more generally, of the group of birational self-maps was proved by A. Andreotti in \cite{A} for varieties of general type,  and a bound in the above sense, was obtained in the $2$-dimensional case. He  showed that there exists a universal function $f$ so that
$$
|\mathrm{Aut}(X)|\leq f(c_{2}),
$$
where $c_{2}$ is the second Chern class of the tangent bundle of the smooth surface $X$.  The function $f$ given by Andreotti is of exponential type, and since then this estimate has been improved  in \cite{HuSa, Cor, X1, X2} and extended for higher dimensions by many others in \cite{HS,CaS,Ez,X3,Ts,Za,HMX}, placed in chronological order.

%Andreotti in \cite{A} proved that the automorphim group of a
%projective surface of general type is finite. Moreover,  showed that
%the order of the automorphim group has an upper bound which depends
%only on the Chern numbers of the surface. This is a natural
%generalization of the  classical Schwarz's finiteness theorems for a
%compact Riemann surface $C$ with genus at least two and of the
%following Hurwitz's the estimate
%$$
%|\mathrm{Aut}(C)|\leq 42\deg(K_C),
%$$
%where $| \mathrm{Aut}(C) |$ denotes the order of automorphism group
%of $C$ and $K_C$ is the canonical bundle \cite{H}.

%In arbitrary dimension, the automorphism group of a smooth compact
%complex manifold of general type is still known to be finite because
%of the finiteness theorem of Kobayashi-Ochiai \cite{KO}. The
%Andreotti's bound for the order of the automorphism group was
%extended to all dimension by A. Howard and A. Sommese in \cite{HS}
%under the hypothesis that the canonical divisor of variety is ample.

In the present paper, we are interested in the subgroup of the automorphism group, or even of the group of birational self--maps,
that preserves a holomorphic foliation $\Fcal$ on a smooth projective surface $X$.  One of the first progress in this direction has been made by J. V. Pereira and P. F. Sanchez in \cite{PS}. They  showed that the
 group ${\rm Bir}(\Fcal)$  of birational self-maps preserving  a foliation of general type on a
projective surface is finite. Therefore, we can  naturally to raise
the question if there exists an upper bound for the order of ${\rm
Bir}(\Fcal)$  in the sense of the previous one.

One purpose of this work is to answer this question for a
particular class  of general type foliations, namely, those whose
 the canonical bundle $K_{\mathcal G}$ of a reduced model $\mathcal G$ of $\Fcal$ is ample, see Corollary \ref{2dimensional}. This follows as consequence
 of Theorem \ref{Teorema principal}, our main result,  where we obtain an explicit upper bound on the order of  ${\rm Aut}(\Fcal)$ in which depends
only on intersections of the canonical bundles of $\mathcal F$ and $X$. More precisely, this bound is a function of  $K_{\Fcal}^{2}$ and $K_{\Fcal}K_{X}$.  In our theorem, a finiteness hypothesis on  ${\rm Aut}(\Fcal)$ is necessary.  We point out that foliations on projective surfaces having infinite automorphism group are very rare and they have been classified, up to birational maps,  in \cite{CF} by S. Cantat and C. Favre.  As a consequence of this classification, in Proposition \ref{classification} we classify the foliations on projective surfaces having infinite automorphism group and ample canonical bundle.

This paper is organized as follows.  In Section \ref{Foliation}, we recall some basic concepts on
 codimension one foliations as well their canonical bundles. In Section \ref{Web section}, we obtain a polynomial upper bound on the order of the automorphism group of a codimension one $k$-web of degree $d$ on $\P^{N}$ in terms of $k,d$ and $N$, and introduce their dual pairs. Finally, in
Section  \ref{proofs},  we prove our main results.

\bigskip
\noindent\textsc{Acknowledgements}.
We are greatly indebted to Charles Favre who have shown us the way to prove Proposition \ref{classification}. We also wish to thank Javier Rib\'on, Maycol Falla and Nivaldo Medeiros for many stimulating conversations on this subject.

\section{Codimension One Foliations}\label{Foliation}

\subsection{Basic Definitions}
Let $X$ be a smooth complex variety of dimension $n>1$.  A \textbf{codimension one foliation $\Fcal$ on $X$},  is
given by an open covering $\mathcal{U}=\{U_{i}\}$ of $X$ and
 $1$-forms $\omega_{i} \in
\Omega^{1}_{X}(U_{i})$ subject to the conditions:
\begin{enumerate}
 \item For each non-empty intersection $U_{i} \cap U_{j}$ there exists a function $g_{ij} \in \OO^{*}_{U_{i} \cap U_{j}}$ such that
  $\omega_{i}= g_{ij} \omega_{j}$;
 \item For every $i$ the zero set of $\omega_{i}$ has codimension at least two;
 \item For every $i$, $\omega_{i}\wedge d\omega_{i}=0$.
\end{enumerate}

The $1$-forms $\{\omega_i\}$ patch together to form a global section
$$
\omega=\{\omega_i\}\in{\rm H}^0(X,\Omega^{1}_{X}\otimes \mathcal L),
$$
where $\mathcal L$ is the line bundle over $X$ determined by the
cocycle $\{g_{ij}\}$. The \textbf{singular set} of $\Fcal$, denoted
by ${\rm{Sing}}(\Fcal)$, is the zero set of the twisted $1$-form
$\omega$.

Let $TX$ be the tangent sheaf of $X$. The \textbf{tangent sheaf} of
 $\Fcal$, induced by a twisted $1$-form $\omega\in
{\rm H}^0(X,\Omega^{1}_{X}\otimes \mathcal L),$ is defined on each open set $U\subset X$ by
$$
T_{\Fcal}(U)=\{v\in TX(U);\ i_{v}\omega=0\}.
$$
The dual of $T_{\mathcal F}$ is the cotangent sheaf of $\mathcal F$
and will be denoted by $T^*_{\mathcal F}$. The determinant of
$T^*_{\mathcal F}$, i.e. $(\wedge^{n-1} T^*_{\mathcal F})^{**}$, is called the \textbf{canonical bundle} of $\mathcal F$ and
is denoted by $K_{\mathcal F}$. If $n=2$, we can use vector fields instead $1$-forms. The foliation can be given by  $v_{i}\in TX(U_{i})$ with codimension two zero set and satisfying $v_{i}=f_{ij}v_{j}$, where $f_{ij}\in \mathcal O^{*}_{U_{i}\cap U_{j}}$. In this case, $K_{\Fcal}$ is the line bundle determined by the cocycle $\{f_{ij}\}$. For example, if $\Fcal$ is given by a (non-trivial) global vector field $v\in{\rm H}^{0}(X,TX)$ and $D$ denotes the zero divisor of $v$ (possibly empty if $v$ has codimension two zero set) then $K_{\Fcal}=\mathcal O_{X}(-D)$.  

%We have a exact sequence of sheaves
%$$
%0\rightarrow T_{\Fcal}\rightarrow TX\rightarrow N_{\Fcal}\rightarrow 0.
%$$
%As the singular set has codimension at least two we obtain the
%adjunction formula
%\[
%K_X =  K_{\mathcal F} \otimes  \mathcal L^*
%\]
%such that $\mathcal L=\det(N_{\Fcal})$.

%Suppose that $\Fcal$ is a holomorphic  foliation on a  projective surface $X$.

%Given $x\in X$ we say that  a  germ of (possibly singular) analytic codimension one subvariety $V$ is a {\bf separatrix} if $V$ is invariant by $\Fcal$, that is $i^{*}(\omega)=0$ where $i:V \longrightarrow X$ is the inclusion.

The condition $(3)$ together with Frobenius Theorem imply that for every point  in the complement of ${\rm Sing} (\Fcal)$,  there exists a unique germ of smooth codimension one hypersurface $V$  invariant by $\Fcal$, i.e., satisfying $i^{*}(\omega)=0$ where $i:V \longrightarrow X$ is the inclusion. Analytic continuation of these subvarieties describes the {\bf leaves} of $\Fcal$. As usual, we will make abuse of notation by writing $(T_{\Fcal})_{x}$ for the tangent space to the leaf passing through $x$ or for the stalk of the sheaf $T_{\Fcal}$ at $x$.

We say that $\varphi:X\longrightarrow X$ preserves $\Fcal$ if it sends leaves on leaves, that is if $\varphi^{*}\omega$ defines the same foliation $\Fcal$. Through this work we want to analyze the order of the  groups ${\rm Aut}(\Fcal)$ and ${\rm Bir}(\Fcal)$, which are respectively defined as the maximal subgroup of automorphisms $\mathrm{Aut}(X)$ and of birational self--maps $\mathrm{Bir}(X)$ that preserve $\Fcal$. It has been proved by  J. V. Pereira and P. Sanchez in \cite{PS}  the following finiteness theorem for general type foliations on surfaces.

\begin{theorem}\label{Pereira-Sanchez}
If $\Fcal$ is a foliation of general type on a smooth projective surface then $\mathrm{Bir}(\Fcal)$ is finite.
\end{theorem}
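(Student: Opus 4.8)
The plan is to pass to the canonical model of $\Fcal$ and show that its automorphism group is a finite algebraic group. A foliation of general type has a well-defined canonical model (McQuillan, Brunella): starting from a reduced model $\Gcal$ of $\Fcal$ on a smooth surface, one takes the Zariski decomposition $K_{\Gcal}=P+N$ of its canonical bundle and contracts the negative part $N$; the result is a normal projective surface $X_{c}$ with at worst cyclic quotient singularities, carrying a reduced foliation $\Fcal_{c}$ whose canonical bundle $K_{\Fcal_{c}}$ is an ample $\Q$-divisor. Since $X_{c}$ is the $\mathrm{Proj}$ of the foliated canonical ring $\bigoplus_{m\ge0}H^{0}(\Gcal,mK_{\Gcal})$, which is a birational invariant, $\mathrm{Bir}(\Fcal)$ acts faithfully on $X_{c}$ by automorphisms preserving $\Fcal_{c}$, and in fact $\mathrm{Bir}(\Fcal)=\mathrm{Aut}(X_{c},\Fcal_{c})$. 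Every such automorphism preserves the ample class $K_{\Fcal_{c}}$; choosing $m$ with $mK_{\Fcal_{c}}$ very ample, the group $\mathrm{Aut}(X_{c},\Fcal_{c})$ embeds as a closed subgroup of the $\mathrm{PGL}$ of the corresponding projective embedding, hence is a linear algebraic group of finite type. Such a group is finite if and only if its identity component is trivial, i.e. if and only if $X_{c}$ carries no nonzero global vector field whose flow preserves $\Fcal_{c}$ (equivalently, no nonzero vector field on a smooth model whose flow preserves $\Fcal$).

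So everything reduces to the assertion that a foliation of general type admits no nonzero infinitesimal symmetry $v$. There are two cases. If $v$ is tangent to $\Fcal$, then $v$ is a nonzero global section of the saturated tangent sheaf $\widehat{T}_{\Fcal}$, which on a surface is the line bundle $K_{\Fcal}^{-1}$; but on a reduced model $K_{\Fcal}$ is big, so $K_{\Fcal}^{-1}$ has no nonzero section, a contradiction. If $v$ is not tangent to $\Fcal$, its flow is a nontrivial one-parameter subgroup of $\mathrm{Aut}(X_{c},\Fcal_{c})$ whose Zariski closure is a positive-dimensional connected algebraic group $G$ preserving $\Fcal_{c}$; replacing $G$ by a one-dimensional subgroup we obtain an action of $\C$, $\C^{*}$, or an elliptic curve. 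If some such subgroup is transverse to $\Fcal_{c}$, then after resolving the foliation by orbit closures the surface is fibered over a curve and $\Fcal_{c}$ is generically transverse to the fibers, i.e. $\Fcal_{c}$ is a Riccati or a turbulent foliation with respect to that fibration. If instead every one-parameter subgroup of $G$ is tangent to $\Fcal_{c}$, then the leaves of $\Fcal_{c}$ are the $G$-orbits, so the general leaf is an algebraic group of dimension one, hence a rational or an elliptic curve, and $\Fcal_{c}$ has a first integral with general fiber of genus $\le1$. In all of these situations the foliated Kodaira dimension is at most $1$, contradicting the general-type hypothesis. Hence $\mathrm{Aut}(X_{c},\Fcal_{c})^{0}=\{1\}$, and therefore $\mathrm{Bir}(\Fcal)=\mathrm{Aut}(X_{c},\Fcal_{c})$ is finite.

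The step I expect to be the main obstacle is the transverse case: one must know that a codimension one foliation on a surface admitting a transverse infinitesimal symmetry cannot be of general type. Geometrically the flow of $v$ induces a transverse affine structure on $\Fcal$ --- a transverse action of $\C$ or $\C^{*}$ on the one-dimensional leaf space --- and to conclude one invokes the classification of Riccati, turbulent and transversely affine foliations on surfaces (Brunella, Mendes, Cousin--Pereira) to see that their foliated Kodaira dimension is at most $1$. Handling the subcase $\dim G\ge2$ cleanly (where $X_{c}$ is a very special rational or bielliptic surface and one must reduce to an appropriate one-parameter subgroup) and checking that the orbit foliation of $G$ can be desingularized compatibly with $\Fcal_{c}$ are the technical points that need care; the rest is formal, given the existence of the canonical model and the structure theory of linear algebraic groups.
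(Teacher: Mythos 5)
The paper does not prove this statement: Theorem \ref{Pereira-Sanchez} is quoted from Pereira--Sanchez \cite{PS}, so there is no in-paper argument to compare yours with. Judged on its own, your proposal is essentially a reconstruction of the original proof. Pereira and Sanchez likewise let $\mathrm{Bir}(\Fcal)$ act on the pluricanonical systems of a reduced model, use $\mathrm{Kod}(\Fcal)=2$ to get a faithful action by projective linear transformations on the pluricanonical image, conclude that the group is a linear algebraic group (hence finite iff its identity component is trivial), and rule out positive dimension by showing that a foliation preserved by the flow of a nonzero vector field is not of general type: in the tangent case a section of $T_{\Fcal}=K_{\Fcal}^{-1}$ contradicts bigness of $K_{\Fcal}$ on a reduced model, and in the transverse case the foliation is Riccati/turbulent or admits a rational first integral with fibers of genus at most $1$, so $\mathrm{Kod}\le 1$. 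This last ingredient is exactly what the present paper invokes elsewhere, via \cite[Proposition 3.8]{CF}, in the proof of Proposition \ref{classification}. The differences in your write-up are inessential: passing through McQuillan's canonical model $X_c$ is not needed (the image of the $m$-pluricanonical map of a reduced model already carries the faithful linear action, which suffices for algebraicity), and the elliptic-curve one-parameter subgroups you allow cannot occur since the group is linear. The points you flag as delicate are the right ones --- equivariance of Seidenberg's reduction and of the elimination of base points of the orbit pencil (both hold because the connected flow fixes each of the finitely many centers being blown up), and the fact that Riccati and turbulent foliations have Kodaira dimension at most $1$ --- but these are standard, and your outline is correct.
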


A foliation on a surface $X$ is said of \textbf{general type} if the Kodaira dimension of $\Fcal$,  ${\rm Kod }(\mathcal F)$, is equal to $2$. The concept of Kodaira dimension for holomorphic foliations has been introduced independently by L. G. Mendes and M. McQuillan. For more information on the subject see \cite{Bru}. For convenience to the reader we will recall it in the next few lines.

An isolated singularity $x$ of $\mathcal F$ is called {\bf reduced} if the eigenvalues of the linear part $Dv(x)$ are not both zero and their quotient, when defined, is not a positive rational number. A foliation is called reduced if all the singularities are reduced.  A remarkable theorem by A. Seidenberg (see \cite{Sei}) says that there exists a sequence of blowing--ups $\pi: \tilde{X} \longrightarrow X$ over singularities of $\Fcal$ such that the induced foliation $\pi^{*}(\Fcal)$ in $\tilde{X}$ has only reduced singularities. Any reduced foliation birationaly equivalent to $\Fcal$ is called {\bf reduced model} of $\Fcal$.

We could define the Kodaira dimension of $\Fcal$ as the Kodaira dimension of the line bundle $K_{\Fcal}$, for short ${\rm Kod}(K_{\Fcal})$. But this is not a birational invariant for $\Fcal$. For example, the radial foliation (lines passing through a point) on $\P^{2}$ has canonical bundle $K_{\Fcal}=\mathcal O_{\P^{2}}(-1)$  but a suitable birational transformation $\P^{2} \dashrightarrow \P^{2}$ defined by polynomials of high degree transforms it into a foliation $\mathcal G$  with canonical bundle $K_{\mathcal G}=\mathcal O_{\P^{2}}(d-1)$.  Fortunately, ${\rm Kod}(K_{\Fcal})$ is a birational invariant for foliations having only reduced singularities. So if $\Fcal$ is a  foliation on the complex surface $X$, and $\mathcal{G}$ is any reduced model of $\Fcal$, the Kodaira dimension of $\Fcal$ is defined  as ${\rm Kod}(K_{\mathcal G})$.
%\[
 % {\rm kod}(\mathcal F)= \limsup_{n \to \infty}
%\frac{ \log  h^0(X, ({K}_{\mathcal G})^{\otimes n}) }{\log n} \, .
%\]

%\begin{enumerate}
%\item {\bf dicritic} if there are infinitely many separatrices through it;
%\item {\bf nondegenerated} if the linear part $Dv(x)$ of some vector fields $v$ defining $\Fcal$ in a neighboorhoud of $x$ is nonsingular;
%\item {\bf reduced} if the eigenvalues of the linear part $Dv(x)$ are not both zero and their quotient, when defined, is not a positive rational number.
%\end{enumerate}

\subsection{Foliations with $K_{\Fcal}$ ample on surfaces}  One of the main propose of this paper is to give a universal bound for the order of ${\rm Aut}(\Fcal)$ by supposing this group is finite and also  that $K_{\Fcal}$ is ample. Foliations having infinite automorphism group were classified up to birational maps by S. Cantat and C. Favre in \cite{CF} in the 2-dimensional case.  In what follows we will make use of their ideas to classify the foliations on projective surfaces having infinite automorphism group and ample canonical bundle.
%Before that we need to introduce some technical tools.

Let $X$ be a projective surface and $f:X \longrightarrow X$ an automorphism. Fix an ample line bundle $\mathcal L$ on $X$. The degree of $f$ with respect to $\mathcal L$ is defined as
\[
{\rm deg}_{\mathcal L}(f)= f^{*}\mathcal L \cdot \mathcal L.
\]
It is known that the asymptotic behavior of the sequence  ${\rm deg}_{\mathcal L}(f^{n})$ is independent of the chosen line bundle $\mathcal L$. See for example the proof of Proposition 3.1 in \cite{BFJ}. Furthermore, this sequence either can be bounded or can have growth of the following types:   linear, quadratic  or  exponential. As we will see below, the case in which this sequence is bounded is particularly interesting for us. Under this hypothesis the automorphism $f$ is the time-$1$ map associated to a flow of a global vector field. For detail see \cite[p. 7]{CF} and references therein.

Now let $\Fcal$ be a foliation on $X$ with $K_{\Fcal}$ ample and infinite group  ${\rm Aut}(\Fcal)$. By \cite[Theorem 1.1]{CF} there exists an element $f\in  {\rm Aut}(\Fcal)$ with infinite order. Since $K_{\Fcal}$ is ample and $f^{*}(K_{\Fcal})=K_{\Fcal}$ then the sequence of degrees ${\rm deg}_{ K_{\Fcal}}(f^{n})$ is bounded. Hence $f$ is the time-$1$ flow of a nontrivial global vector field $v$. 

This implies that $\Fcal$ is preserved by a flow of a global vector field $v\in{\rm H}^{0}(X,TX)$. Indeed, let ${\rm Aut}_{0}(X)$ be the connected component of the identity of the complex Lie group ${\rm Aut}(X)$ and  ${\rm Aut}_{0}(\Fcal)= {\rm Aut}_{0}(X)\cap {\rm Aut}(\Fcal)$ be the closed Lie subgroup of  ${\rm Aut}_{0}(X)$.  If $\varphi_{t}$ is the time-$t$ flow of $v$, then $\varphi_{n}=f^{n}\in {\rm Aut}_{0}(\Fcal)$ for all $n\in \mathbb N$. This is enough to ensure that  the Lie algebra of ${\rm Aut}_{0}(\Fcal)$ is nontrivial. 

Since $\Fcal$ is preserved by a flow, it follows from \cite[Proposition 3.8]{CF} that $\Fcal$ is one of the following types:
% listed bellow for the reader's convenience:
%\begin{enumerate}
%   \item the vector fields $v$ is tangent to an almost elliptic fibre bundle and $\Fcal$ is either a turbulent foliation or coincides with the elliptic fibration;
%   \item the surface $X$ is a tore, $v$ and $\Fcal$ are linear;
%   \item the vector fields $v$ is tangent to a rational fibration and $\Fcal$ is either a Riccati foliation or coincides with the rational fibration;
%   \item the surface $X$ is a suspension of $\P^{1}$ over an elliptic curve, $v$ is obtained by a suspension and $\Fcal$ too, unless  $\Fcal$ coincides with the rational fibration;
%   \item ups to a birational map $v$ is a linear vector fields of  $\P^{1}\times \P^{1}$ and $\Fcal$ is given by a linear vector fields that commutes with $v$.
%\end{enumerate}
 %All the cases $(1)$ to $(4)$ are:
 turbulent, Riccati, rational fibration, elliptic fibration, linear foliation on a torus or  up to a birational map, $\Fcal$ is preserved by the flow of a vector field on  $\P^{1}\times \P^{1}$.  All these cases, unless the last one, cannot happen under the hypothesis of $K_{\Fcal}$ to be ample.  In fact, if $\Fcal$ is turbulent or Riccati and $F$ is a generic fiber of the elliptic or  rational  fibration then $K_{\Fcal}F=0$ (see \cite[p. 23]{Bru}).  If $\Fcal$ is a rational fibration then the restriction of $K_{\Fcal}$ to a generic leaf $L\cong \mathbb P^{1}$ is $\mathcal O_{\mathbb P^{1}}(-2)$. Finally,  if $\Fcal$ is a linear foliation on a torus then $K_{\Fcal}=\mathcal O_{X}$. 

Suppose $\Fcal$ is birational  to a foliation preserved by the flow of a vector field on  $\P^{1}\times \P^{1}$. The birational map from $X$ to $\P^{1}\times \P^{1}$ is obtained in the following way. When $X$ is rational we can contract $(-1)$-curves to arrive in a minimal surface, which must be either $\P^{2}$  or a Hirzebruch surface $\Sigma_{n}$, $n\ge 0$.  We note that, at this point, $K_{\Fcal}$ is still ample. Blowing-up a singular point of $v$ we can replace $\P^{2}$ by $\Sigma_{1}$.  To arrive in $\P^{1}\times \P^{1}$ we have to make flipping of fibers that contains singularities of $v$ (see \cite[p. 87]{Bru}).  The resulting foliation, still denoted by $\Fcal$, on $\P^{1}\times \P^{1}$ is invariant by the flow of a  vector field $v_{1}\oplus v_{2}$. In addition, the argument of S. Cantat and C. Favre in the end of the proof of Proposition 3.8 in \cite{CF} shows that $\Fcal$ is given by a global vector field if $v$ is not tangent to a foliation by rational curves. We summarize the above discussion in the following proposition.

\begin{proposition}\label{classification}
 If $\Fcal$ is a foliation on a smooth projective surface $X$ with $K_{\Fcal}$ ample and  ${\rm Aut}(\Fcal)$ infinite, then up to a birational map, $\Fcal$ is preserved by the flow of a  vector field $v=v_{1}\oplus v_{2}$ on $\P^{1}\times \P^{1}$. Moreover, if $v$ is not tangent to a foliation by rational curves then $\Fcal$ is given by a global vector field on $\P^{1}\times \P^{1}$.
\end{proposition}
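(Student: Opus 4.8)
The plan is to organize the discussion preceding the statement into a proof, the engine being the Cantat--Favre classification. First I would invoke \cite[Theorem 1.1]{CF} to obtain an automorphism $f\in{\rm Aut}(\Fcal)$ of infinite order. Because $f^{*}K_{\Fcal}=K_{\Fcal}$, taking the ample class $K_{\Fcal}$ as polarization gives ${\rm deg}_{K_{\Fcal}}(f^{n})=(f^{n})^{*}K_{\Fcal}\cdot K_{\Fcal}=K_{\Fcal}^{2}$ for every $n$, so the degree sequence of $f$ is bounded. By the description of automorphisms with bounded degree sequence (see \cite[p.~7]{CF} and the proof of Proposition~3.1 in \cite{BFJ}), $f$ is the time-$1$ map of the flow $\varphi_{t}$ of a nontrivial global vector field $v\in{\rm H}^{0}(X,TX)$. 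Since $\varphi_{n}=f^{n}$ lies in ${\rm Aut}_{0}(\Fcal)={\rm Aut}_{0}(X)\cap{\rm Aut}(\Fcal)$ for all $n$, the Lie algebra of ${\rm Aut}_{0}(\Fcal)$ is nonzero; hence $\Fcal$ is preserved by the flow of $v$.

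Next I would apply \cite[Proposition 3.8]{CF} to the flow-invariant foliation $\Fcal$: it is turbulent, Riccati, a rational fibration, an elliptic fibration, a linear foliation on a torus, or---up to a birational map---preserved by the flow of a vector field on $\P^{1}\times\P^{1}$. I would rule out the first five using ampleness of $K_{\Fcal}$. If $\Fcal$ is turbulent or Riccati, resp.\ an elliptic fibration, then $K_{\Fcal}\cdot F=0$ for a generic fiber $F$ of the underlying elliptic, resp.\ rational, fibration (\cite[p.~23]{Bru}); if $\Fcal$ is a rational fibration, the restriction of $K_{\Fcal}$ to a generic leaf $L\cong\P^{1}$ is $\Ocal_{\P^{1}}(-2)$; and if $\Fcal$ is a linear foliation on a torus, then $K_{\Fcal}=\Ocal_{X}$. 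Each alternative contradicts $K_{\Fcal}$ being ample, so only the last case survives.

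It then remains to put the surviving case into the asserted normal form. Contracting $(-1)$-curves brings us to a minimal rational model---either $\P^{2}$ or a Hirzebruch surface $\Sigma_{n}$, $n\ge 0$; blowing up a singular point of $v$ replaces $\P^{2}$ by $\Sigma_{1}$, and successive elementary transformations centered at fibers through singularities of $v$ pass from $\Sigma_{n}$ to $\Sigma_{0}=\P^{1}\times\P^{1}$ (\cite[p.~87]{Bru}), the vector field being transported along the way. On $\P^{1}\times\P^{1}$ the two invariant rulings force the transported field to split as $v=v_{1}\oplus v_{2}$. For the last assertion, I would quote the end of the proof of \cite[Proposition 3.8]{CF}: if $v$ is not everywhere tangent to a foliation by rational curves, then the twisted $1$-form defining $\Fcal$ has no zero divisor, i.e.\ $\Fcal$ is given by a global vector field on $\P^{1}\times\P^{1}$.

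I expect the main difficulty to be not a single computation but keeping the bookkeeping honest: ensuring that the structural alternatives of \cite[Proposition 3.8]{CF} are stated and used for $\Fcal$ on the correct surface, that the intersection numbers $K_{\Fcal}\cdot F$ and $K_{\Fcal}|_{L}$ are computed against \emph{generic} (hence well-behaved) fibers and leaves, and that the chain of blow-ups and elementary transformations reaching $\P^{1}\times\P^{1}$ genuinely carries the global vector field $v$ to a split one---this last point being precisely what the discussion in \cite[p.~87]{Bru} is designed to control.
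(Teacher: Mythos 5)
Your proposal is correct and takes essentially the same route as the paper, whose own proof is precisely the discussion preceding the proposition (Cantat--Favre Theorem 1.1, bounded degree growth for the $K_{\Fcal}$-polarization, the flow of a global vector field, \cite[Proposition 3.8]{CF}, elimination of the fibered and torus cases by ampleness, and the passage to $\P^{1}\times\P^{1}$ via contractions and elementary transformations). The only blemish is the garbled ``resp.''\ pairing of turbulent/Riccati/elliptic-fibration with the underlying elliptic/rational fibration, which is harmless since $K_{\Fcal}\cdot F=0$ in each of those cases anyway.
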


\begin{example}
Let us give an example of a foliation on $\mathbb P^{2}$ with $K_{\Fcal}$ ample and ${\rm Aut}(\Fcal)$ infinite. Consider the foliation $\Fcal$ of degree two on $\P^{2}$ given in homogeneous coordinates by the $1$-form
 \[
 \omega=-byz^{2}dx+(bxz^{2}+azy^{2})dy-ay^{3}dz
 \]
where $a,b\in \mathbb C^{*}$. This foliation has ample canonical bundle $K_{\Fcal}=\mathcal O_{\P^{2}}(1)$ and is preserved by the flow of the global vector field $\displaystyle{v=y\frac{\partial}{\partial x}}$. In fact, the flow preserves the $1$-forma $\omega$ since $L_{v}\omega = 0$ where $L_{v}$ is the Lie derivative. Blowing--up two singular points of $\Fcal$, contained in $(y=0)$, and contracting a $(-1)$-curve we arrive in a foliation given by the $1$-form $bdy-adx$ in an affine chart of  $\P^{1}\times \P^{1}$ which is preserved by the flow of $\displaystyle{\frac{\partial}{\partial x}}$. We leave the details to the reader.
\end{example}

\section{Codimension one Webs}\label{Web section}

\subsection{Basic Definitions}

Let $X$ be a smooth complex variety of dimension $n>1$.  A \textbf{codimension one $k$-web $\WW$ on $X$} is given by an open
covering $\mathcal{U}=\{U_{i}\}$ of $X$ and $k$-symmetric $1$-forms
$\omega_{i} \in \Sym^{k}\Omega^{1}_{X}(U_{i})$ subject to the
conditions:
\begin{enumerate}
 \item For each non-empty intersection $U_{i} \cap U_{j}$ there exists a non-vanishing function $g_{ij} \in \OO_{U_{i} \cap U_{j}}$ such that
  $\omega_{i}= g_{ij} \omega_{j}$;
 \item For every $i$ the zero set of $\omega_{i}$ has codimension at least two;
 \item For every $i$ and a generic $x\in U_i$, the germ of $\omega_{i}$ at $x$ seen as homogeneous polynomial of degree
 $k$ in the ring $\mathcal O_x [dx_{1},...,dx_{n}]$ is square--free;
 \item For every $i$ and a generic $x \in U_{i}$, the germ of $\omega_{i}$ at $x$
 is a product of $k$ germs of  $1$-forms $(\omega_{i})_{x}=\beta_{1}\cdots  \beta_{k}$ satisfying the integrability condition $\beta_{i}\wedge d\beta_{i}=0$.
\end{enumerate}

The $k$-symmetric $1$-forms $\{\omega_i\}$ patch together to form a
global section
$$
\omega=\{\omega_i\}\in
{\rm H}^0(X,{\rm Sym}^{k}\Omega^{1}_{X}\otimes \mathcal L),
$$
where $\mathcal L$
is the line bundle over $X$ determined by the cocycle $\{g_{ij}\}$. Two global sections in ${\rm H}^0(X,{\rm Sym}^{k}\Omega^{1}_{X}\otimes \mathcal L)$ determine the same web if and only if they differ by the multiplication by an element of ${\rm H}^{0}(X,\mathcal O_{X}^{*})$.
The \textbf{singular set} of $\WW$, denoted by ${\rm{Sing}}(\WW)$,
is the zero set of  $\omega$.

We say that $x\in X$ is a \textbf{smooth point} of $\WW$,
$x\in \WW_{sm}$, if $x\notin {\rm{Sing}}(\WW)$ and the germ of
$\omega$ at $x$ satisfies the conditions described in $(3)$ and
$(4)$ above. For any smooth point $x$ of $\WW$ we have $k$ distinct
(not necessarily in general position) linearly embedded subspaces of
dimension $n-1$ passing through $x$. Each one of these subspaces
will be called $(n-1)$-plane tangent to $\WW$ at $x$ and denoted by
$T^1_x\WW,...,T^k_x\WW$. Furthermore, these conditions ensure that in a neighborhood of $x$ there are $k$ distinct codimension one foliations $\Fcal^{1},...,\Fcal^{k}$ satisfaing $T\Fcal_{x}^{j}=T^{j}_{x}\WW$.

In the case that $X=\P^{N}$ the \textbf{degree} of $\WW$,
 denoted by $\deg(\WW)$, is geometrically defined as the degree of the tangency locus between $\WW$ and a generic $\mathbb P^{1}$
  linearly embedded in $\P^{N}$. If $i:\mathbb P^{1} \hookrightarrow  \P^{N}$ is the inclusion, then ${\rm deg}(\WW)$ is the degree of the
   zero divisor of the twisted $k$-symmetric $1$-form
   $i^*\omega\in {\rm H}^0(\mathbb P^{1},{\rm Sym}^{k}\Omega^{1}_{\mathbb P^{1}}\otimes \mathcal L|_{\mathbb P^{1}})$.
   Since $\Omega^{1}_{\mathbb P^{1}}=\mathcal O_{\mathbb P^{1}}(-2)$ one obtains $\mathcal L=\mathcal O_{\mathbb P^{n}}(d+2k)$ where $d=\deg(\WW)$. From the Euler sequence\footnote{For properties of symmetric powers see \cite[Propositions A2.2 and  A2.7]{E}.} we can deduce the following
\[
0 \longrightarrow \mathrm{Sym}^{k-1} \left( \mathcal O_{\P^{N}}(1)^{\oplus (N+1)}\right) \otimes \mathcal O_{\P^{N}} \longrightarrow    \mathrm{Sym}^{k} \left( \mathcal O_{\P^{N}}(1)^{\oplus (N+1)}\right)  \longrightarrow  \mathrm{  Sym}^{k}T\P^{N}  \longrightarrow 0.
\]
Taking the dual sequence and tensorizing by $\mathcal O_{\P^{N}}(d+2k)$  we can see that a  $k$-web given by
$$\omega\in {\rm H}^{0}(\P^{N}, \mathrm{  Sym}^{k}\Omega_{\P^{N}}^{1}\otimes \mathcal O_{\P^{N}}(d+2k))$$
can be represented in homogeneous coordinates by a $k$-symmetric form, still denoted by $\omega$
\[
\displaystyle{\omega(x)=\sum_{|I|=k}A_{I}(x) dx_{0}^{i_{0}}\cdots
dx_{N}^{i_{N}}},
\]
whose the coefficients $A_{I}$ are homogeneous polynomials of degree $d+k$ in $x_{0},...,x_{N}$. Two $k$-symmetric forms in homogeneous coordinates define the same $k$-web if they differ by a constant.

We finish this section remarking that a $k$-web on a possible singular projective variety $X$ is a $k$-web on its smooth locus which extends to a global web on any of its desingularizations.  We also observe that  a $1$-web is  a codimension one foliation, as defined in  Section \ref{Foliation}.

\subsection{Automorphism Group of Projective Webs}

Let $\WW$ be a codimension one $k$-web on the complex projective
space $\P^{N}$ defined by the twisted $k$-symmetric $1$-form $\omega$. We say that $T\in \mathrm{Aut}(\P^{N})$ preserves $\WW$ if the pullback $T^{*}\omega$ determines the same $k$-web $\WW$. As remarked in the last section, $\omega$ and $T^{*}\omega$ must  differ by the multiplication by an element of ${\rm H}^{0}(\mathbb P^{N},\mathcal O_{\mathbb P^{N}}^{*})$, that is, by a nonvanishing constant. The automorphism group of $\WW$, denoted by
$\mathrm{Aut}(\WW)$, is the maximal subgroup of $\mathrm{Aut}(\P^{N})$
that preserves $\WW$.

\begin{proposition}\label{boundweb}
 Let $\WW$ be a $k$-web on $\P^{N}$, $N\ge 2$, of degree $d$ and finite  automorphism group. Then
 \[
 |{\rm{Aut}}(\WW)|\le (d+2k)^{(N+1)^{2}-1}.
 \]
\end{proposition}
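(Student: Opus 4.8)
The plan is to realise $\mathrm{Aut}(\WW)$ as a generic fibre of an explicit polynomial self-map of the matrix space and to bound the size of that fibre by an intersection-theoretic (Bézout-type) count; the fact that the exponent in the statement is $(N+1)^{2}-1=\dim\mathrm{PGL}_{N+1}(\C)$ is a strong hint that this is the right approach.

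First I would fix the set-up. By the discussion preceding the statement, $\WW$ is encoded by its twisted symmetric form $\omega\in V:=H^{0}\bigl(\P^{N},\mathrm{Sym}^{k}\Omega^{1}_{\P^{N}}\otimes\mathcal O_{\P^{N}}(d+2k)\bigr)$, which in homogeneous coordinates reads $\omega=\sum_{|I|=k}A_{I}(x)\,dx_{0}^{i_{0}}\cdots dx_{N}^{i_{N}}$ with the $A_{I}$ homogeneous of degree $d+k$; moreover $T\in\mathrm{Aut}(\P^{N})=\mathrm{PGL}_{N+1}(\C)$ preserves $\WW$ precisely when $T^{*}\omega$ is a nonzero scalar multiple of $\omega$. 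Hence $G:=\mathrm{Aut}(\WW)$ is the stabiliser of $[\omega]\in\P(V)$ for the natural $\mathrm{PGL}_{N+1}(\C)$-action, and it is finite by hypothesis. Put $m:=(N+1)^{2}-1=\dim\mathrm{PGL}_{N+1}(\C)$. Now consider the map $\mu\colon\mathrm{Mat}_{N+1}(\C)\to V$, $A\mapsto A^{*}\omega$, obtained by the substitutions $x\mapsto Ax$ and $dx_{\ell}\mapsto\sum_{j}a_{\ell j}\,dx_{j}$. Each component of $\mu$ is homogeneous of degree $d+2k$ in the entries of $A$: degree $d+k$ comes from $A_{I}(Ax)$ and degree $k$ from the $k$ differentials $d(Ax)_{\ell}$. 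Consequently $\mu$ descends to a rational map $\bar\mu\colon\P^{m}\dashrightarrow\P(V)$ which is regular on $\mathrm{PGL}_{N+1}(\C)=\P^{m}\setminus\{\det=0\}$ and is there given by forms of degree $d+2k$.

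Next I would analyse $\bar\mu$. Its image is the closure $\overline{\mathcal O}$ of the orbit $\mathrm{PGL}_{N+1}(\C)\cdot[\omega]$, which has dimension $m$ exactly because $G$ is finite; thus $\bar\mu$ is generically finite onto $\overline{\mathcal O}$. A direct computation gives $\bar\mu([A])=\bar\mu([B])\iff[BA^{-1}]\in G$, so the fibre of $\bar\mu$ over a point $[A^{*}\omega]$ of the orbit is the left coset $G\cdot[A]\subset\mathrm{PGL}_{N+1}(\C)$, a set of exactly $|G|$ points. In particular $\deg(\bar\mu)\ge|G|$, and it remains to bound $\deg(\bar\mu)\cdot\deg(\overline{\mathcal O})$.

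Finally I would carry out the degree estimate. Resolving the indeterminacy of $\bar\mu$ by a birational morphism $\sigma\colon Y\to\P^{m}$ yields a morphism $\phi=\bar\mu\circ\sigma\colon Y\to\overline{\mathcal O}$ with $\phi^{*}\mathcal O_{\P(V)}(1)=\sigma^{*}\mathcal O_{\P^{m}}(d+2k)-E$ for some effective divisor $E$ on $Y$. Since $\dim Y=\dim\overline{\mathcal O}=m$, the projection formula gives $\deg(\phi)\cdot\deg(\overline{\mathcal O})=\bigl(\phi^{*}\mathcal O_{\P(V)}(1)\bigr)^{m}$, while $\phi^{*}\mathcal O_{\P(V)}(1)$ and $\sigma^{*}\mathcal O_{\P^{m}}(d+2k)$ are nef with effective difference $E$, so the elementary fact ``$\alpha^{m}\le\beta^{m}$ for nef classes $\alpha,\beta$ on a smooth projective $m$-fold with $\beta-\alpha$ effective'' gives $\bigl(\phi^{*}\mathcal O_{\P(V)}(1)\bigr)^{m}\le\bigl(\sigma^{*}\mathcal O_{\P^{m}}(d+2k)\bigr)^{m}=(d+2k)^{m}$. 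Combining,
\[
|G|\ \le\ \deg(\bar\mu)\ \le\ \deg(\bar\mu)\cdot\deg(\overline{\mathcal O})\ \le\ (d+2k)^{m}=(d+2k)^{(N+1)^{2}-1}.
\]
I expect this last step to be the main obstacle — turning ``$\bar\mu$ is generically finite and given by degree-$(d+2k)$ forms'' into $\deg(\bar\mu)\cdot\deg(\overline{\mathcal O})\le(d+2k)^{m}$, i.e.\ controlling the contribution of the base locus of $\bar\mu$. The nef/effective comparison on the resolution disposes of it cleanly; alternatively one can run a direct Bézout count in $\P^{m}$ with the $\bar\mu$-preimages of $m$ general hyperplanes of $\P(V)$, which are $m$ hypersurfaces of degree $d+2k$ whose isolated intersection points lying off the base locus already number $|G|\cdot\deg\overline{\mathcal O}$.
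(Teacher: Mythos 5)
Your argument is correct, but it runs along a genuinely different track from the paper's. The paper stays entirely inside Bézout's theorem: it views $T\in{\rm Aut}(\WW)$ as a point of $\P^{r}$, $r=(N+1)^{2}-1$, writes the invariance condition as $\omega(x)\wedge(T^{*}\omega)(x)=0$, observes that the resulting equations $H^{x}_{IJ}=A_{I}(x)B^{x}_{J}-A_{J}(x)B^{x}_{I}$ are homogeneous of degree $d+2k$ in the matrix entries, and then, using finiteness of ${\rm Aut}(\WW)$, selects $r$ of the hypersurfaces $Z^{x}_{IJ}$ so that each avoids the components of the previous intersections and their common zero set is ${\rm Aut}(\WW)$, after which Bézout gives $(d+2k)^{r}$. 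You use the same starting computation (the components of $A\mapsto A^{*}\omega$ have degree $d+2k$ in the entries of $A$), but instead of cutting the group out by equations you package them into the orbit map $\bar\mu\colon\P^{r}\dashrightarrow\P(V)$, identify $|{\rm Aut}(\WW)|$ with (a lower bound for) the degree of this generically finite map onto the orbit closure, and bound $\deg(\bar\mu)\cdot\deg(\overline{\mathcal O})\le(d+2k)^{r}$ by resolving indeterminacy and comparing the nef classes $\phi^{*}\mathcal O_{\P(V)}(1)\le\sigma^{*}\mathcal O_{\P^{r}}(d+2k)$. What each buys: the paper's route is shorter and more elementary, but its selection of $r$ hypersurfaces is the delicate point, since the full locus $\{\omega\wedge T^{*}\omega=0\}$ also contains degenerate matrices (where $T^{*}\omega$ can vanish identically) and is typically not finite; your route absorbs exactly this excess automatically, because the base locus and the $\{\det=0\}$ hypersurface are disposed of by the effective divisor $E$ and by dimension counts, and it yields the slightly sharper statement that $(d+2k)^{r}$ bounds $\deg(\bar\mu)\cdot\deg(\overline{\mathcal O})$, not just $|{\rm Aut}(\WW)|$. (Only cosmetic quibbles: your coset condition $[BA^{-1}]\in G$ versus $[AB^{-1}]\in G$ is a harmless convention issue since $G$ is a group, and to get $\deg(\bar\mu)\ge|G|$ you should note, as you implicitly do, that the base locus lies in $\{\det=0\}$, whose image cannot dominate $\overline{\mathcal O}$.)
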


\begin{proof}
We first consider the $k$-symmetric form defining $\WW$ in homogeneous coordinates
\[
\displaystyle{\omega(x)=\sum_{|I|=k}A_{I}(x) dx_{0}^{i_{0}}\cdots
dx_{N}^{i_{N}}},
\]
where $A_{I}(x_{0},...,x_{N})$ is a homogeneous polynomial of degree
$d+k$ for all $I=(i_{0},...,i_{N})$.

%Given an automorphism $T(x)=(T_{0}(x):\cdots :T_{N}(x))$ of $\P^{N}$ with $T_{i}(x)=\sum a_{ij} x_{j}$, the pull-back of $\omega$ is

Writing an element of ${\rm{Aut}}(\WW)$ as a $(N+1)\times (N+1)$
matrix we may think of its coordinates as homogeneous coordinates of
a point $T=(...:a_{ij}:...)\in\P^{r}$ where $r=(N+1)^{2}-1$.

Since $\omega$ and $T^{*}\omega$ differ by the multiplication by a nonvanishing constant, for fixed $x=(x_{0},...,x_{N})$ the entries of $T$ must satisfy the equation $\omega(x)\wedge (T^{*}\omega)(x)=0$.
Now we can rewrite the pullback of $\omega$ by $T$,
\[
\displaystyle{T^{*}\omega(x)=\sum_{|I|=k}A_{I}(...,\sum a_{ij} x_{j},...) (\sum a_{0j} dx_{j})^{i_{0}}\cdots (\sum a_{Nj} dx_{j})^{i_{N}}}
\]
in terms of the basis $\{dx_{0}^{i_{0}}\cdots dx_{N}^{i_{N}}\;;\; |I|=k\}$ to obtain
\[
\displaystyle{T^{*}\omega(x)=\sum_{|I|=k}B_{I}^{x}(...,a_{ij},...)
dx_{0}^{i_{0}}\cdots dx_{N}^{i_{N}}},
\]
where $B_{I}^{x}(...,a_{ij},...)$ is a homogeneous polynomial of
degree $d+2k$ in $T=(...,a_{ij},...)$ for each $x\in\mathbb C^{n+1}$
and for each index  $I$. So the equation  $\omega(x)\wedge
(T^{*}\omega)(x)=0$ is equivalent to the vanishing of the
polynomials
$H_{IJ}^{x}(...,a_{ij},...)=A_{I}(x)B_{J}^{x}(...,a_{ij},...)-A_{J}(x)B_{I}^{x}(...,a_{ij},...)$
for all $I,J$ with $|I|=|J|=k$.

Therefore ${\rm{Aut}}(\WW)$ is an intersection of  hypersurfaces
$Z_{IJ}^{x}=(H_{IJ}^{x}=0)$ of degree $d+2k$ in $\P^{r}$. By
hypothesis,   ${\rm{Aut}}(\WW)$ is finite. Then we can choose $r$ among the
 $Z_{IJ}^{x}$, say $Z_{1},...,Z_{r}$ such that $Z_{i}$ does
not contain any irreducible component of $Z_{1}\cap \cdots \cap
Z_{i-1}$ for each $i=1,...,r$, and $\displaystyle{{\rm{Aut}}(\WW)= \cap_{i=1}^{r}Z_{i}}$.
It follows from B\'ezout's Theorem that
\[
|{\rm{Aut}}(\WW)|\le (d+2k)^{(N+1)^{2}-1}.
\]

\end{proof}

In particular, we obtain a polynomial upper bound for the order of
the self-birational group of a generic  foliation $\Fcal$ on
$\P^2$ depending only on its degree.

\begin{corollary}\label{boundweb cor}
 If $\Fcal$ is a reduced foliation on $\P^{2}$ of degree $d>1$, then ${\rm Bir}(\Fcal)={\rm Aut}(\Fcal)$ and
 \[
 |{\rm{Bir}}(\Fcal)|\le (d+2)^{8}.
 \]
\end{corollary}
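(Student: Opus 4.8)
The plan is to deduce the bound from Proposition~\ref{boundweb} applied to $\Fcal$ viewed as a $1$-web on $\P^{2}$, i.e.\ with $N=2$ and $k=1$, for which $(d+2k)^{(N+1)^{2}-1}=(d+2)^{8}$. For this I need two inputs: that $\mathrm{Aut}(\Fcal)$ is finite, and that $\mathrm{Bir}(\Fcal)=\mathrm{Aut}(\Fcal)$. For the first, I would recall that a foliation of degree $d$ on $\P^{2}$ has $K_{\Fcal}=K_{\P^{2}}\otimes\mathcal L=\Ocal_{\P^{2}}(d-1)$, where $\mathcal L=\Ocal_{\P^{2}}(d+2)$ is the twist appearing in Section~\ref{Web section}; since $d>1$ this is ample, and since $\Fcal$ is \emph{reduced} it is its own reduced model, so $\mathrm{Kod}(\Fcal)=\mathrm{Kod}(K_{\Fcal})=2$ and $\Fcal$ is of general type. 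Theorem~\ref{Pereira-Sanchez} then gives that $\mathrm{Bir}(\Fcal)$ — hence also the subgroup $\mathrm{Aut}(\Fcal)\subseteq\mathrm{Aut}(\P^{2})$ — is finite, so Proposition~\ref{boundweb} will apply once the second input is in place.

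For the equality $\mathrm{Bir}(\Fcal)=\mathrm{Aut}(\Fcal)$, let $\varphi\in\mathrm{Bir}(\Fcal)$ and resolve it: choose a smooth projective surface $Y$ with birational morphisms $p,q\colon Y\to\P^{2}$ and $\varphi=q\circ p^{-1}$, and set $\Gcal:=p^{*}\Fcal$. Since $q=\varphi\circ p$ and $\varphi^{*}\Fcal=\Fcal$ one has $q^{*}\Fcal=p^{*}\Fcal=\Gcal$. Because $\Fcal$ is reduced it is a relatively minimal model, so blowing up never decreases the foliated canonical class; hence
\[
K_{\Gcal}=p^{*}K_{\Fcal}+\Delta_{p}=q^{*}K_{\Fcal}+\Delta_{q},
\]
with $\Delta_{p},\Delta_{q}\ge 0$ effective and supported on the exceptional loci of $p$ and $q$. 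Writing $H$ for the hyperplane class and using $K_{\Fcal}=\Ocal_{\P^{2}}(d-1)$, subtraction gives $(d-1)(p^{*}H-q^{*}H)=\Delta_{q}-\Delta_{p}$. Intersecting with the nef class $p^{*}H$ and using $p^{*}H\cdot\Delta_{p}=H\cdot p_{*}\Delta_{p}=0$, $p^{*}H\cdot\Delta_{q}=H\cdot p_{*}\Delta_{q}\ge 0$ and $(p^{*}H)^{2}=1$, this yields $(d-1)(1-p^{*}H\cdot q^{*}H)\ge 0$, hence $p^{*}H\cdot q^{*}H\le 1$ because $d>1$. On the other hand $q^{*}H$ is nef and big (its self-intersection is $1$), so it is not contracted by $p$ and $p^{*}H\cdot q^{*}H=H\cdot p_{*}(q^{*}H)\ge 1$. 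Thus $p^{*}H\cdot q^{*}H=1$, so $p_{*}(q^{*}H)=H$, which means $\varphi^{-1}$ of a general line is a line, i.e.\ the linear system defining $\varphi$ has degree one and $\varphi$ is a linear automorphism of $\P^{2}$; as it preserves $\Fcal$ we conclude $\varphi\in\mathrm{Aut}(\Fcal)$, whence $\mathrm{Bir}(\Fcal)=\mathrm{Aut}(\Fcal)$.

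Then Proposition~\ref{boundweb} with $N=2$, $k=1$, $\deg\Fcal=d$ gives
\[
|\mathrm{Bir}(\Fcal)|=|\mathrm{Aut}(\Fcal)|\le (d+2)^{(2+1)^{2}-1}=(d+2)^{8},
\]
as claimed. I expect the only genuinely non-formal point to be the non-negativity of the discrepancies $\Delta_{p},\Delta_{q}$, i.e.\ that a reduced foliation on a surface is relatively minimal in the foliated sense; this belongs to the birational theory of foliations (Brunella), and alternatively one may replace this whole step by invoking the uniqueness of the canonical model of a foliation of general type, which shows directly that a birational self-map of $(\P^{2},\Fcal)$ — already its own canonical model since $K_{\Fcal}$ is ample — must be biregular. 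Everything else is formal.
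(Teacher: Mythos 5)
Your proof is correct, and it follows the paper's overall skeleton — finiteness of $\mathrm{Bir}(\Fcal)$ via Theorem \ref{Pereira-Sanchez} (since $K_{\Fcal}=\Ocal_{\P^{2}}(d-1)$ is ample and $\Fcal$ is its own reduced model, so it is of general type), then Proposition \ref{boundweb} with $k=1$, $N=2$ — but you establish the key identity $\mathrm{Bir}(\Fcal)=\mathrm{Aut}(\Fcal)$ by a genuinely different route. The paper disposes of it in one stroke: by \cite[Theorem 1, p.~75]{Bru} the ample canonical bundle gives a unique minimal model, and since the singularities are already reduced and $\P^{2}$ has no $(-1)$-curves, $(\P^{2},\Fcal)$ is itself minimal, so any birational self-map preserving $\Fcal$ is biregular. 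You instead resolve a given $\varphi\in\mathrm{Bir}(\Fcal)$ by $p,q\colon Y\to\P^{2}$ and run a discrepancy-plus-intersection computation: $K_{\Gcal}=p^{*}K_{\Fcal}+\Delta_{p}=q^{*}K_{\Fcal}+\Delta_{q}$ with $\Delta_{p},\Delta_{q}\ge 0$, and pairing with the nef class $p^{*}H$ forces $p^{*}H\cdot q^{*}H=1$, i.e.\ $\varphi$ linear. This is more self-contained: it replaces the appeal to uniqueness of minimal models by the elementary blow-up formula of \cite[p.~26]{Bru}, the same formula the paper itself uses later in the proof of Corollary \ref{2dimensional}. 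The one point you should phrase more carefully is the assertion that reducedness makes $\Fcal$ ``relatively minimal, so blowing up never decreases the foliated canonical class'': what is actually needed (and true) is that in the towers $p$ and $q$ every intermediate center is a regular point (discrepancy $+E$) or a reduced singularity (discrepancy $0$) of the intermediate foliation, which holds because reduced singularities persist under blow-up and blowing up a regular point creates only reduced singularities; with that remark your argument is complete. In short, the paper's proof is shorter but leans on a stronger black box, while yours is longer but essentially from first principles, and both feed into the same final application of Proposition \ref{boundweb}.
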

\begin{proof}
%It follows from Proposition \ref{boundweb} and Theorem
%\ref{Pereira-Sanchez}.

Since  ${K}_{\Fcal}=\mathcal{O}_{\P^{2}}(d-1)$ is ample, follows from \cite[Theorem 1 p.75]{Bru} that there exists a unique minimal model for  $(\P^{2},\Fcal)$. The minimal model is obtained in two steps: first resolve the singularities of the foliation and then contract exceptional invariant curves.  But the singularities are already reduced and $\P^{2}$ does not admit $(-1)$-curves. Therefore $(\P^{2},\Fcal)$ is minimal and consequently ${\rm{Bir}}(\Fcal)={\rm{Aut}}(\Fcal)$. To conclude the proof we have just to apply Theorem \ref{Pereira-Sanchez} and Proposition \ref{boundweb}.
\end{proof}

\subsection{Duality}

Let $\check{\mathbb P}^{N}$ denote the projective space parametrizing hyperplanes in $\mathbb P^{N}$ and $\mathcal I$ be the incidence variety, that is 
$$
\mathcal I=\{(x,H)\in \mathbb P^{N}\times \check{\mathbb P}^{N} \;;\; x\in H\}. 
$$
If $X\subset \P^{N}$ is a projective variety  of dimension $n$, the \textbf{conormal variety} ${\rm Con(X)}$ is the closure in $\mathcal I$ of the set of pairs $(x,H)$ such that $x$ is a smooth point of $X$ and $H$ is a hyperplane containing the tangent plane $T_{x}X$.

Now let $\WW$ be an
irreducible codimension one $k$-web on $X$. The conormal
variety of the pair $(X,\WW)$ is the subvariety ${\rm{Con}}(X,\WW)$ of
$\mathcal I$ defined by
\[
{\rm{Con}}(X,\WW)=\overline{\{(x,H)\in \mathcal I\;;\; x \in \WW_{sm}
\hspace{0.1cm} {\rm{and}} \hspace{0.1cm} \exists \hspace{0.1cm}i,
\hspace{0.1cm} 1 \leq i \leq k ,\hspace{0.1cm} H\supset T^i_x\WW
\hspace{0.1cm}  \}},
\]
where the overline in the right side means the Zariski closure in
$\mathcal I$. We note that for $x\in \WW_{sm}$ the fiber
$\pi^{-1}(x)\cap {\rm{Con}}(X,\WW)$ is a union of $k$ linear spaces
of dimension $N-n$, then ${\rm{Con}}(X,\WW)$ has codimension $N-1$ in
$\mathcal I$.

Using the natural identification of $\mathcal I$ with $\P
(T^{*}\P^{N})$ the conormal ${\rm{Con}}(X,\WW)$ can be also
characterizad by the following conditions (see \cite[Section 1.4]{PiPer}):
\begin{enumerate}
\item ${\rm{Con}}(X,\WW)$ is irreducible;
\item $\pi ({\rm{Con}}(X,\WW)) = X$;
\item For generic $x \in X$ the fiber $\pi^{-1}(x)\cap {\rm{Con}}(X,\WW)$ is a union
of linear subspaces corresponding to the projectivization of the conormal bundles in $\P^{N}$
of the leaves of $\WW$ passing through $x$.
\end{enumerate}

Given $x\in \WW_{sm}$ we have the superposition of $k$ distinct
codimension one foliations in a neighborhood $U$ of $x$ in $X$. The
lift of these foliations to $\mathcal I$ determines a codimension
one foliation $\mathcal F_{U}$ on $\pi^{-1}(U)\cap {\rm{Con}}(X,\WW)$
in which is just the foliation obtained by the restriction of the
contact form $\alpha \in {\rm{H}}^{0}(\mathcal I, \Omega_{\mathcal
I}^{1} \otimes \mathcal O_{\mathcal I}(1))$ to $\pi^{-1}(U)\cap
{\rm{Con}}(X,\WW)$ (see \cite[p. 39]{PiPer} for details). So we get a
codimension one foliation $\Fcal$ globally defined on
${\rm{Con}}(X,\WW)$ such that the leaves are the lifting of leaves of
$\WW$.

In order to obtain the dual pair $\mathcal D(X,\WW)$ we consider the
variety $Y=\check{\pi}({\rm{Con}}(X,\WW))\subset \check{\P}^{N}$. Let
$s$ be the number of irreducible components of the fiber
$\check{\pi}^{-1}(H)\cap {\rm{Con}}(X,\WW)$ for generic
$H\in Y$. If $\dim Y>1$, over regular values of
$\check{\pi}$ the direct image of $\Fcal$ can be identified with the
superposition of $s$ distinct codimension one foliations. Therefore
we arrive in a codimension one web $\WW^{\lor}$ on $Y$. If $\dim
Y=1$ the leaves of $\Fcal$ are projected in points. In this case we
adopt the convention that on the irreducible curve $Y$ there is only
one web $\WW^{\lor}$, the $1$-web  which has as leaves the points of
$Y$. We shall say that $\mathcal D(X,\WW)=(Y,\WW^{\lor})$ is the
{\bf{dual pair}} of $(X,\WW)$.

The {\bf{characteristic numbers}} associated to the pair $(X,\WW)$
are
\[
d_{i}(X,\WW)= \int_{\mathcal
I}[{\rm{Con}}(X,\WW)][{\rm{Con}}(\P^{i})],
\]
where $i=0,...,N-1$ and $\P^{i}\subset \P^{N}$. Notice that if $X=\mathbb P^{N}$ then $d_{0}(X,\WW)=k$ and $d_{1}(X,\WW)={\rm deg}(\WW)$. The last equality follows from the fact that ${\rm Con}(\mathbb P^{1})$ is the set of points $(x,H)$ such that $x\in \mathbb P^{1}$ and $H$ contains $\mathbb P^{1}$. So if $\mathbb P^{1}$ represents a generic line,  $x\in{\rm{Con}}(X,\WW)\cap{\rm{Con}}(\P^{1})$ if and only if  $x$ is contained in the tangency locus between $\WW$ and $\mathbb P^{1}$. We also observe that since
${\rm{Con}}(X,\WW)={\rm{Con}}\mathcal D (X,\WW)$ and ${\rm{Con}}(\P^{i})
= {\rm{Con}}(L)$ where $L\simeq \P^{N-i-1} \subset \check{\P}^{N}$
one obtains
\begin{eqnarray}\label{duality}
d_{i}(X,\WW)=d_{N-i-1}\mathcal D (X,\WW).
\end{eqnarray}

 %\begin{remark}
 %We note that  Theorem \ref{polar} together Proposition \ref{polarxchar}  implie that the characteristic numbers depend only on the Chern-Mather numbers of $T_{\Fcal}$.
 %\end{remark}

\section{Main Results}\label{proofs}

Given a smooth projective variety $X$ of dimension $n$ and a
codimension one foliation $\Fcal$ on $X$, with ample canonical
bundle $K_{\Fcal}$, the $m$-th pluricanonical map
$$
\phi_{m}: X \longrightarrow \P^{N}=\P{\rm{H}}^{0}(X,
K_{\Fcal}^{\otimes m})^{\lor}
$$
is an embedding for sufficiently large $m$. The map $\phi_{m}$ send
a point $p\in X$ to the hyperplane in $\P{\rm{H}}^{0}(X,
K_{\Fcal}^{\otimes m})$ consisting of the sections vanishing at $p$.
Let us denote by $X_{m}=\phi_{m}(X)\subset \P^{N}$  the embedded
variety and $\Fcal_{m}$ the respective foliation on $X_{m}$. Since
${\rm{Aut}}(\Fcal)$ acts naturally in $\P{\rm{H}}^{0}(X,
K_{\Fcal}^{\otimes m})$ and so in $\P{\rm{H}}^{0}(X,
K_{\Fcal}^{\otimes m})^{\lor}$, this action induces a monomorphism
of groups
\[
{\rm{Aut}}(\Fcal) \longrightarrow {\rm{PGL}}(N,\mathbb C)
\]
which image in ${\rm{PGL}}(N,\mathbb C)$ is exactly ${\rm{Lin}}(\Fcal_{m})$, the automorphisms of $\P^{N}$ leaving $X_{m}$ and $\Fcal_{m}$ invariant.

The following remark will be useful in the sequel. 

\begin{remark}\label{Mats}
 Matsusaka's Big Theorem states that given an ample line bundle $\mathcal L$ on a smooth projective variety $X$ of dimension $n$, there exists a positive integer $m_{0}$, depending only on the coefficients of the Hilbert polynomial of $\mathcal L$ such that $\mathcal L^{\otimes m}$ is very ample for $m\ge m_{0}$. This theorem was improved by J. Koll\'ar and T. Matsusaka in \cite{KM}  showing that $m_{0}$  depends only on the coefficients $t^{n}$ and $t^{n-1}$ in the Hilbert Polynomial $P(t)$. This implies that $m_{0}$ depends only on $\mathcal L^{n}$ and $\mathcal L^{n-1}K_{X}$. We will need of an effective version of Matsusaka's Big Theorem obtained by G. Fernandez del Busto in  \cite{Fe}. He proves that if  $\mathcal L$ is
an ample line bundle on a smooth projective  surface $X$ and
\begin{equation}\label{cota-efetiva}
m>k_{0}=\frac 12 \Bigl[\frac {(\mathcal L\cdot (K_X+4 \mathcal L)+1)^2}{\mathcal L^2}+ 3\Bigr]
\end{equation} then $\mathcal L^{\otimes m}$ is very ample.
\end{remark}

Now we are able to prove our main result. The idea of the proof is in some sense close to the argument of \cite[Theorem 1]{HS}. That is, first apply Matsusaka's Big Theorem to embed $X$, goes to the dual space and then use homogeneous coordinates to bound the order of ${\rm{Aut}}(\Fcal)$.  In our case, the dual of the pair $(X,\Fcal)$ is a codimension one web. 

\begin{theorem}\label{Teorema principal}
Let $\Fcal$ be  a  foliation on a smooth irreducible projective surface $X$. If $K_{\Fcal}$ is ample and ${\rm{Aut}}(\Fcal)$ is finite  then
 \[
|{\rm{Aut}}(\Fcal)|\le \left((3m^2+2m)K_{\Fcal}^{2}\right)^{(m^2K_{\Fcal}^2+2)^2-1}
 \]
 where $m= (K_{\Fcal}\cdot (K_{X}+4 K_{\Fcal})+1)^2+3K_{\Fcal}^{2}$.
\end{theorem}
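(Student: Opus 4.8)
The plan is to follow the strategy outlined just before the statement: use the effective Matsusaka theorem to embed $X$ via a pluricanonical map of $K_\Fcal$, pass to the dual pair to convert the foliation into a web on projective space, and then apply the polynomial bound of Proposition \ref{boundweb} to that web. First I would fix $m$ as in the statement; by Fernandez del Busto's bound \eqref{cota-efetiva} (applied with $\mathcal L = K_\Fcal$), this choice of $m$ exceeds the threshold $k_0$, so $K_\Fcal^{\otimes m}$ is very ample and $\phi_m\colon X\hookrightarrow \P^N=\P\,{\rm H}^0(X,K_\Fcal^{\otimes m})^\lor$ is an embedding, with $N = h^0(X,K_\Fcal^{\otimes m})-1$. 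As noted in the text, ${\rm Aut}(\Fcal)$ embeds into ${\rm PGL}(N,\C)$ as ${\rm Lin}(\Fcal_m)$, the linear automorphisms of $\P^N$ preserving $X_m$ and $\Fcal_m$; since this is finite, I get a finite subgroup of ${\rm Aut}(\P^N)$ to bound.

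The key geometric step is to replace the pair $(X_m,\Fcal_m)$ — a foliation on a surface sitting in $\P^N$, not on $\P^N$ itself — by its dual pair $\mathcal D(X_m,\Fcal_m)=(Y,\WW^\lor)$, which \emph{is} a web on a projective space $\check\P^N$ (or on a subvariety of it). Any automorphism of $\P^N$ preserving $(X_m,\Fcal_m)$ induces, by duality, an automorphism of $\check\P^N$ preserving $(Y,\WW^\lor)$, and this correspondence is injective; so $|{\rm Aut}(\Fcal)| \le |{\rm Aut}(\WW^\lor)|$ once I know ${\rm Aut}(\WW^\lor)$ is finite (which follows from finiteness on the source side). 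Then I apply Proposition \ref{boundweb} to $\WW^\lor$ on $\check\P^N$: if $\WW^\lor$ has degree $d'$ and is a $k'$-web, then $|{\rm Aut}(\WW^\lor)| \le (d'+2k')^{(N+1)^2-1}$. What remains is to compute $N$, $k'$ and $d'$ in terms of $K_\Fcal^2$ and $K_\Fcal K_X$.

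For these numerics: $N+1 = h^0(X,K_\Fcal^{\otimes m})$, which by Riemann–Roch on the surface $X$ (using Kodaira vanishing, since $K_\Fcal^{\otimes m}$ is very ample hence $K_\Fcal^{\otimes m}\otimes K_X^{-1}$-type vanishing applies for the higher cohomology — more directly, $h^0 = \chi(K_\Fcal^{\otimes m}) = \tfrac12 m K_\Fcal\cdot(mK_\Fcal-K_X)+\chi(\Ocal_X)$) is bounded above by something like $m^2 K_\Fcal^2 + 2$; the statement's exponent $(m^2K_\Fcal^2+2)^2-1$ tells me I should aim for $N+1 \le m^2K_\Fcal^2+2$, which I expect to get after estimating the lower-order terms against the positivity of $K_\Fcal$ and $mK_\Fcal-K_X$. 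For the web invariants, $k' = d_0\mathcal D(X_m,\Fcal_m) = d_{N-1}(X_m,\Fcal_m)$ and $d' = d_1\mathcal D(X_m,\Fcal_m) = d_{N-2}(X_m,\Fcal_m)$ by \eqref{duality}, and these characteristic numbers are intersection numbers $\int_{\mathcal I}[{\rm Con}(X_m,\Fcal_m)][{\rm Con}(\P^i)]$ that I would compute via the conormal variety: they come out as polynomials in $\deg X_m = m^2 K_\Fcal K_X$ (wait — $= (mK_\Fcal)^2$ restricted, i.e. $m^2 K_\Fcal^2$ since the hyperplane class pulls back to $mK_\Fcal$) and in $K_\Fcal\cdot(mK_\Fcal) = mK_\Fcal^2$, so that $d'+2k'$ is bounded by $(3m^2+2m)K_\Fcal^2$.

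\textbf{The main obstacle} I anticipate is precisely this last bookkeeping: bounding the characteristic numbers $d_{N-2}(X_m,\Fcal_m)$ and $d_{N-1}(X_m,\Fcal_m)$ of the embedded pair in terms of $K_\Fcal^2$ and $K_\Fcal K_X$, cleanly enough to land inside $(3m^2+2m)K_\Fcal^2$. This requires identifying ${\rm Con}(X_m,\Fcal_m)$ inside $\mathcal I \cong \P(T^*\P^N)$, expressing its class, and intersecting with the (well-understood) classes of the ${\rm Con}(\P^i)$; the foliation contributes a factor involving $K_\Fcal\cdot H = mK_\Fcal^2$ (the tangency of $\Fcal_m$ with the hyperplane class) while the surface contributes its degree $m^2K_\Fcal^2$, and I must check that the combinatorial coefficients from the $N-2$ and $N-1$ slices of the incidence variety do not exceed the budget. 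Once the three quantities $N+1 \le m^2K_\Fcal^2+2$ and $d'+2k' \le (3m^2+2m)K_\Fcal^2$ are in hand, Proposition \ref{boundweb} gives exactly the asserted inequality and the proof is complete.
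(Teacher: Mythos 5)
Your outline follows the paper's own route (effective Matsusaka to embed via $\phi_m$, pass to the dual pair, apply Proposition \ref{boundweb}, then compute $N$, $k'$, $d'$), but three steps that carry the actual content are left unresolved, and one of them as sketched would not work. First, Proposition \ref{boundweb} is a statement about webs on the full projective space, so you may only invoke it after showing that the dual pair of $(X_m,\Fcal_m)$ is $(\check{\P}^{N},\WW_m)$, i.e.\ that $Y=\check{\pi}({\rm Con}(X_m,\Fcal_m))$ is all of $\check{\P}^{N}$; your parenthetical ``(or on a subvariety of it)'' is precisely the point you cannot leave open, since for $Y\subsetneq\check{\P}^{N}$ you have no bound to quote. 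The paper settles this by proving that a generic leaf of $\Fcal_m$ is not a line: a line-leaf avoiding ${\rm Sing}(\Fcal_m)$ would force $K_{\Fcal_m}|_L=\OO_{\P^1}(-2)$, and infinitely many line-leaves through a common singular point would force $X_m$ to be a linear $\P^2$ with the radial foliation, whose canonical bundle $\OO_{\P^2}(-1)$ is not ample; this use of ampleness is an essential step missing from your argument.

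Second, your route to $N+1\le m^2K_\Fcal^2+2$ via Riemann--Roch does not give an upper bound as sketched: $h^0=\chi+h^1-h^2$, so you would need to kill $h^1$ (Kodaira vanishing would require $mK_\Fcal-K_X$ to be ample, which is not known here), and even then $\chi(\OO_X)$ is not a priori controlled by $K_\Fcal^2$ and $K_\Fcal K_X$. The paper instead quotes the Matsusaka--Mumford bound \eqref{cota-secoes}, $h^0(X,K_\Fcal^{\otimes m})\le m^2K_\Fcal^2+2$, valid for the very ample $K_\Fcal^{\otimes m}$. Third, what you flag as ``the main obstacle'' is in the paper a short explicit computation, not an estimate: by \eqref{duality}, $k'=d_{N-1}(X_m,\Fcal_m)={\rm tang}(\Fcal,K_\Fcal^{\otimes m})=(m^2+m)K_\Fcal^2$ by Brunella's tangency formula, and $d'=d_{N-2}(X_m,\Fcal_m)=\deg X_m=m^2K_\Fcal^2$, so $d'+2k'=(3m^2+2m)K_\Fcal^2$ exactly; without carrying this out (and the two points above) the stated bound is not yet proved.
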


\begin{proof}
Let us fix $m= (K_{\Fcal}\cdot (K_{X}+4 K_{\Fcal})+1)^2+3K_{\Fcal}^{2}$. By Remark \ref{Mats}, $K_{\Fcal}^{\otimes m}$ is very ample and hence the $m$-pluricanonical map is an embedding.    Let $X_{m}\subset \P^{N}$ be the embedded variety and $\Fcal_{m}$ the respective foliation on $X_{m}$. We will first prove that the dual web $\mathcal W_{m}=\Fcal_{m}^{\lor}$ is a codimension one web on $\check{\P}^{N}$. Since the dual variety of a curve, in which is not a line, is always a hypersurface,  we must prove that a generic leaf of $\Fcal_{m}$ is not a line.  If a leaf $L$ of $\Fcal_{m}$ is a line then $L$ must pass through a singular point of the foliation. In fact, if $L$ does not contain a singular point then the restriction of $K_{\Fcal_{m}}$ to this leaf coincides with the canonical bundle $K_{L}=\mathcal O_{\mathbb P^{1}}(-2)$ of $L$.  This contradicts the hypothesis of  $K_{\Fcal_{m}}$ be ample. Therefore if a generic leaf of $\Fcal_{m}$ is a line, there are an infinite quantity passing through the same singular point $x\in \sing(\Fcal_{m})$.  Since $X$ is smooth irreducible this is enough to ensure that $X$ is a linearly embedded $\mathbb P^{2}$ and $\Fcal_{m}$ is the foliation by lines passing through $x$. The last assertion follows from the fact that the tangence between two foliations is an algebraic subset of $X$.   But, in this case, $K_{\Fcal_{m}}=\mathcal O_{\mathbb P^{2}}(-1)$ is not ample.

The image of ${\rm{Aut}}(\Fcal)$ in
${\rm{PGL}}(N,\mathbb C)$ by the monomorphism
\[
{\rm{Aut}}(\Fcal) \longrightarrow {\rm{PGL}}(N,\mathbb C)
\]
is ${\rm{Lin}}(\Fcal_{m})$, the automorphisms of $\P^{N}$ leaving $X_{m}$ and $\Fcal_{m}$ invariant. Since
${\rm{Lin}}(\Fcal_{m})\simeq {\rm{Aut}}(\WW_{m})$ we have just to bound ${\rm{Aut}}(\WW_{m})$. By hypothesis
${\rm{Aut}}(\Fcal)$ is finite, and thus  one obtains the same conclusion for ${\rm{Aut}}(\WW_{m})$. It follows from Proposition \ref{boundweb} that
\[
|{\rm{Aut}}(\WW_{m})|\le (d+2k)^{(N+1)^{2}-1}.
\]

But from $(\ref{duality})$, 
\[
d=d_{1}(\check{\P}^{N},\WW_{m})=d_{N-2}(X_{m},\Fcal_{m})
\]
and
\[
k=d_{0}(\check{\P}^{N},\WW_{m})=d_{N-1}(X_{m},\Fcal_{m}).
\] 
Now we can write this numbers intrinsically.  The second one is the number of tangencies between $\Fcal_{m}$ and a generic hyperplane section of $X_{m}$. That is, 
\[
d_{N-1}(X_{m},\Fcal_{m})={\rm tang}(\Fcal_{m},\mathcal O_{X_{m}}(1))={\rm tang}(\Fcal,K_{\Fcal}^{\otimes m}). 
\]
Thus by \cite[Proposition 2 p.23]{Bru}, $d_{N-1}(X_{m},\Fcal_{m})=(m^{2}+m)K_{\Fcal}^{2}$. It is not hard to see that the first number $d_{N-2}(X_{m},\Fcal_{m})$ coincides with  ${\rm deg}(X_{m})$, hence $d_{N-2}(X_{m},\Fcal_{m})=m^{2}K_{\Fcal}^{2}$. Therefore 
\[
|{\rm{Aut}}(\WW_{m})|\le ((3m^{2}+2m)K_{\Fcal}^{2})^{(N+1)^{2}-1}.
\]

Since $N=h^{0}(X,K_{\Fcal}^{\otimes m})-1$, to conclude the proof of the theorem we need only to apply the following bound  given in  \cite[Theorem 3]{MM} 
\begin{equation}\label{cota-secoes}
h^0(X,K_{\Fcal}^{\otimes m})\leq m^2K_{\Fcal}^2+2.
\end{equation}

\end{proof}

\begin{corollary}\label{main corollary}
 Let $\Fcal$ be a foliation on a non--rational smooth irreducible projective surface $X$. If $K_{\Fcal}$ is ample then
 \[
|{\rm{Aut}}(\Fcal)|\le \left((3m^2+2m)K_{\Fcal}^{2}\right)^{(m^2K_{\Fcal}^2+2)^2-1}
 \]
 where $m= (K_{\Fcal}\cdot (K_{X}+4 K_{\Fcal})+1)^2+3K_{\Fcal}^{2}$.
\end{corollary}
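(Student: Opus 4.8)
The plan is to deduce Corollary \ref{main corollary} from Theorem \ref{Teorema principal} by showing that the finiteness hypothesis on ${\rm Aut}(\Fcal)$ in the theorem is automatically satisfied when $X$ is non-rational. In other words, the only obstruction to finiteness — the existence of an infinite automorphism group — has already been completely analyzed in Proposition \ref{classification}, and that analysis forces $X$ to be rational.

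Concretely, I would argue by contradiction. Suppose $\Fcal$ is a foliation on a smooth irreducible projective surface $X$ with $K_{\Fcal}$ ample and ${\rm Aut}(\Fcal)$ infinite. By Proposition \ref{classification}, up to a birational map $\Fcal$ is preserved by the flow of a vector field $v=v_{1}\oplus v_{2}$ on $\P^{1}\times\P^{1}$; in particular $X$ is birational to $\P^{1}\times\P^{1}$, hence $X$ is a rational surface. This contradicts the hypothesis that $X$ is non-rational. Therefore ${\rm Aut}(\Fcal)$ is finite, and Theorem \ref{Teorema principal} applies verbatim to give the stated bound with the same value of $m$.

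The main (and essentially only) point requiring care is making sure the hypotheses of Proposition \ref{classification} are genuinely met in the reduction step: one needs $K_{\Fcal}$ ample — which is assumed — and ${\rm Aut}(\Fcal)$ infinite — which is the contradiction hypothesis — so there is nothing extra to check. One should also note that "non-rational" is exactly the complement of the class of surfaces appearing in Proposition \ref{classification}, since every surface birational to $\P^{1}\times\P^{1}$ is rational and conversely the surfaces in the classification are all rational (or carry a linear foliation on a torus, but that case is excluded by ampleness of $K_{\Fcal}$ as already observed in the discussion preceding Proposition \ref{classification}). Thus the proof is short:

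\begin{proof}
If ${\rm Aut}(\Fcal)$ were infinite, then by Proposition \ref{classification} the surface $X$ would be birational to $\P^{1}\times\P^{1}$, hence rational, contradicting our hypothesis. Therefore ${\rm Aut}(\Fcal)$ is finite and the conclusion follows from Theorem \ref{Teorema principal}.
\end{proof}

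I do not anticipate any real obstacle here: the corollary is a direct packaging of Theorem \ref{Teorema principal} together with the classification in Proposition \ref{classification}, the latter doing all the geometric work. The only thing to be vigilant about is not to overstate the converse — the corollary does not claim that rationality implies infinite ${\rm Aut}(\Fcal)$, only that non-rationality is a clean sufficient condition for the finiteness needed to invoke the main theorem.
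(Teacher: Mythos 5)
Your proposal is correct and is essentially identical to the paper's proof, which simply notes that finiteness of ${\rm Aut}(\Fcal)$ follows from Proposition \ref{classification} (since its conclusion forces $X$ to be birational to $\P^{1}\times\P^{1}$, hence rational) and then invokes Theorem \ref{Teorema principal}. Your version just spells out the contradiction argument that the paper leaves implicit.
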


\begin{proof}
The finiteness of ${\rm{Aut}}(\Fcal)$ is ensured by Proposition \ref{classification}. 
\end{proof}

In the following corollary, the pair $(X,\Fcal)$ represents a foliation $\Fcal$ on a smooth irreducible projective surface $X$.

\begin{corollary}\label{2dimensional}
There exists a function $f$, so that if $(X,\Fcal)$ is reduced and has ample canonical bundle $K_{\Fcal}$, then the minimal model $(Y,\mathcal G)$ has ample canonical bundle $K_{\mathcal G}$  and
\[
|{\rm{Bir}}(\Fcal)|\le f(K_{\mathcal G}^{2},K_{\mathcal G}K_{Y}).
\]
\end{corollary}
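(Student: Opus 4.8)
The plan is to deduce the corollary from Theorem \ref{Teorema principal}, the point being that the hypotheses force $(X,\Fcal)$ to coincide with its own minimal model. Recall from Brunella's theory that the minimal model of a reduced foliated surface is obtained from it by contracting finitely many $\Fcal$-exceptional curves, and that every such curve is a smooth rational curve $E$ with $E^{2}=-1$ which is invariant by $\Fcal$ and carries at most two (reduced) singularities of $\Fcal$; see \cite[Theorem 1 p.75]{Bru}. By the index formula for an invariant rational curve, $K_{\Fcal}\cdot E=Z(\Fcal,E)-2$, where $Z(\Fcal,E)$ denotes the number of singularities of $\Fcal$ on $E$ counted with multiplicity, so an $\Fcal$-exceptional curve satisfies $K_{\Fcal}\cdot E\le 0$. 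As $K_{\Fcal}$ is ample, $K_{\Fcal}\cdot C>0$ for every irreducible curve $C\subset X$; hence $X$ has no $\Fcal$-exceptional curve, so $(X,\Fcal)$ is relatively minimal, and --- $K_{\Fcal}$ being ample --- the minimal model of its birational class is unique \cite[Theorem 1 p.75]{Bru}. Therefore $(Y,\mathcal G)=(X,\Fcal)$; in particular $K_{\mathcal G}=K_{\Fcal}$ is ample, $K_{\mathcal G}^{2}=K_{\Fcal}^{2}$ and $K_{\mathcal G}K_{Y}=K_{\Fcal}K_{X}$.

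Since $(X,\Fcal)$ is its own unique minimal model, every birational self-map of $X$ preserving $\Fcal$ is a biregular automorphism, exactly as in the proof of Corollary \ref{boundweb cor}; thus ${\rm Bir}(\Fcal)={\rm Aut}(\Fcal)$. Moreover $K_{\Fcal}$ ample gives ${\rm Kod}(\Fcal)={\rm Kod}(K_{\mathcal G})=2$, so $\Fcal$ is of general type; by Theorem \ref{Pereira-Sanchez}, ${\rm Bir}(\Fcal)$ --- hence ${\rm Aut}(\Fcal)$ --- is finite, and Theorem \ref{Teorema principal} applies to $(X,\Fcal)$.

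It remains to write the resulting estimate as a function of $K_{\mathcal G}^{2}$ and $K_{\mathcal G}K_{Y}$. Setting $m=(K_{\Fcal}\cdot(K_{X}+4K_{\Fcal})+1)^{2}+3K_{\Fcal}^{2}$, Theorem \ref{Teorema principal} yields
\[
|{\rm Bir}(\Fcal)|=|{\rm Aut}(\Fcal)|\le\bigl((3m^{2}+2m)K_{\Fcal}^{2}\bigr)^{(m^{2}K_{\Fcal}^{2}+2)^{2}-1}.
\]
Since $K_{\Fcal}\cdot(K_{X}+4K_{\Fcal})=K_{\Fcal}K_{X}+4K_{\Fcal}^{2}=K_{\mathcal G}K_{Y}+4K_{\mathcal G}^{2}$ and $K_{\Fcal}^{2}=K_{\mathcal G}^{2}$, the right-hand side is a function of $a:=K_{\mathcal G}^{2}$ and $b:=K_{\mathcal G}K_{Y}$ alone; explicitly one may take
\[
f(a,b)=\bigl((3m^{2}+2m)a\bigr)^{(m^{2}a+2)^{2}-1},\qquad m=(b+4a+1)^{2}+3a.
\]
The only genuinely nontrivial step is the first paragraph, which rests on the precise description, due to Brunella and McQuillan, of the foliated minimal model program on surfaces --- namely that the passage to the minimal model contracts only $\Fcal$-invariant $(-1)$-curves along which $K_{\Fcal}$ has nonpositive degree --- so that ampleness of $K_{\Fcal}$ is exactly the obstruction to any further contraction and pins down $(Y,\mathcal G)=(X,\Fcal)$.
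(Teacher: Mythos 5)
Your argument is correct, but it takes a different route from the paper's. You prove the stronger fact that a reduced pair $(X,\Fcal)$ with $K_{\Fcal}$ ample is already its own minimal model: any $\Fcal$-exceptional curve $E$ satisfies $K_{\Fcal}\cdot E\le 0$ (indeed $K_{\Fcal}\cdot E=-1$ or $0$ according to whether its contraction yields a regular point or a reduced singularity), which is impossible when $K_{\Fcal}$ is ample; uniqueness of minimal models for reduced foliations that are not rational fibrations then gives $(Y,\mathcal G)=(X,\Fcal)$ and ${\rm Bir}(\Fcal)={\rm Aut}(\Fcal)$. The paper never identifies $X$ with $Y$: it lets the contraction $\pi:(X,\Fcal)\longrightarrow (Y,\mathcal G)$ be arbitrary and shows instead that ampleness descends along each blow-down, using the formula $K_{\Fcal}=\pi^{*}(K_{\mathcal G_{1}})\otimes\mathcal O((1-l_{x})E)$ together with the Nakai--Moishezon criterion, and then applies Theorem \ref{Pereira-Sanchez} and Theorem \ref{Teorema principal} to $(Y,\mathcal G)$. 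Both proofs rest on the same local computation at a blown-up reduced point and both need Brunella's uniqueness theorem to convert birational self-maps into automorphisms; your version buys a sharper conclusion --- no contraction ever occurs, so the bound can be stated directly with $K_{\mathcal G}^{2}=K_{\Fcal}^{2}$ and $K_{\mathcal G}K_{Y}=K_{\Fcal}K_{X}$ --- while the paper's version is insensitive to whether $(X,\Fcal)$ is minimal. One small point of rigor: deducing $K_{\Fcal}\cdot E\le 0$ from the index formula requires $Z(\Fcal,E)\le 2$, i.e.\ that each of the (at most two) singularities on $E$ contributes exactly $1$ to $Z$; this does hold because $E$ is the exceptional divisor of a blow-up at a reduced singularity or a regular point, but it is cleaner to read the inequality off the blow-up formula for $K_{\Fcal}$ quoted above, as the paper does.
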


\begin{proof}
We proceed
 similarly to the proof of Corollary \ref{boundweb cor}. Let $(Y,\mathcal G)$ the minimal model of $(X,\Fcal)$.
  The minimal model is obtained contracting exceptional invariant curves $\pi:(X,\Fcal) \longrightarrow (Y,\mathcal G)$ and the contraction
  of an exceptional curve produces a reduced singularity or a regular point. Thus $\pi$ is a sequence of blowing--ups over reduced singularities or regular points.

We will show that amplitude of $K_{ \Fcal}$ implies amplitude of $K_{\mathcal G}$. In general if $\pi: (X,\Fcal ) \longrightarrow (Y_{1},{\mathcal G}_{1})$ is the blow up over a singularity $x$ of ${\mathcal G}_{1}$ and $l_{x}$ is the order of ${\mathcal G}_{1}$ on $E=\pi^{-1}(x)$ then $K_{\Fcal}=\pi^{*}(K_{{\mathcal G}_{1}})\otimes \mathcal O((1-l_{x})E)$, see \cite[p. $26$]{Bru}. If $x$ is a reduced singularity then $K_{\Fcal}=\pi^{*}(K_{{\mathcal G}_{1}})$ and if $x$ is a regular point then $K_{\Fcal}=\pi^{*}(K_{{\mathcal G}_{1}})\otimes \mathcal O(E)$. Thus  $K_{\Fcal}$  ample implies that  $K_{{\mathcal G}_{1}}$ is ample, as can be checked by using Nakai--Moishezon Criterion. This is enough to deduce the amplitude of $K_{\mathcal G}$.

%From the adjunction formula $K_{X}=\pi^{*}(K_{Y})\otimes \mathcal O (E_{j})^{\otimes_{j}}$ one obtains $K_{\mathcal G} K_{Y}=K_{\tilde{\Fcal}}K_{\tilde{X}}$.

To finish the proof of the corollary we observe that ${\rm{Bir}}(\Fcal)={\rm{Bir}}(\mathcal G)={\rm{Aut}}(\mathcal G)$ and apply Theorem \ref{Pereira-Sanchez} together with Theorem \ref{Teorema principal}.
\end{proof}

\begin{remark}
%We note that Corollary \ref{2dimensional} gives a bound for ${\rm{Bir}}(\Fcal)$ in terms of numerical invariants of $\Fcal$ under birational maps. Indeed,  two birational foliations have the same minimal model $(Y,\mathcal G)$.  
We remember that by definition,  $\Fcal$ is of general type if and only if  $\Fcal$ has a reduced model $\tilde{\Fcal}$ with big canonical bundle $K_{\tilde{\Fcal}}$. Corollary \ref{2dimensional} can be applied to foliations $\Fcal$ with the stronger hypothesis on $K_{\tilde{\Fcal}}$ to be ample.
\end{remark}

\end{document}